\documentclass{amsart}


\usepackage{amsmath}
\usepackage{amsthm}
\usepackage{amssymb}
\usepackage{amsfonts}
\usepackage{enumitem}
\usepackage{setspace}
 \singlespacing
\onehalfspacing
\usepackage[bookmarks=true,hyperindex, pdftex,colorlinks,citecolor=blue]{hyperref}
\usepackage{centernot} 
\usepackage{marginnote} 
\usepackage[left=3cm,right=3cm,top=2cm,bottom=2cm]{geometry}
\usepackage{bbm}
\usepackage[all]{xy}
\usepackage{tikz}
\usetikzlibrary{arrows}
\usetikzlibrary{shapes,decorations}
\usetikzlibrary{positioning}







\def\<{\langle}
\def\>{\rangle}



\theoremstyle{plain}
\newtheorem{theorem}{Theorem}[section]
\newtheorem{lemma}[theorem]{Lemma}
\newtheorem{proposition}[theorem]{Proposition}

\theoremstyle{definition}
\newtheorem{definition}[theorem]{Definition}
\newtheorem{example}[theorem]{Example}

\theoremstyle{remark}
\newtheorem{remark}[theorem]{Remark}

\numberwithin{equation}{section}
\usetikzlibrary{matrix}
\usepackage[all]{xy}
\begin{document}

\title[On super-rigid and uniformly super-rigid operators]{On super-rigid and uniformly super-rigid operators}

\author[O. Benchiheb]{Otmane Benchiheb}

\author[F. Sadek]{Fatimaezzahra Sadek}

\author[M. Amouch]{Mohamed Amouch}

\address[Otmane Benchiheb, Fatimaezzahra Sadek and Mohamed Amouch]{University Chouaib Doukkali.
Department of Mathematics, Faculty of science
Eljadida, Morocco} 
\email{otmane.benchiheb@gmail.com}
\email{sadek.fatimaezzahra@yahoo.fr}
\email{amouch.m@ucd.ac.ma}
\keywords{Hypercyclicity, recurrence, super-recurrence, rigidity, uniform rigidity.}
\subjclass[2010]{
47A16, 
    37B20  	
}

\date{} 


\begin{abstract}
An operator $T$ acting on a Banach space $X$ is said to be super-recurrent if for each open subset $U$ of $X$, there exist
$\lambda\in\mathbb{K}$ and $n\in \mathbb{N}$ such that $\lambda T^n(U)\cap U\neq\emptyset$.
In this paper, we introduce and study the notions of super-rigidity and uniform super-rigidity which are related to the notion of super-recurrence.
We investigate some properties of these classes of operators and show that they share some properties with super-recurrent operators.
At the end, we study the case of finite-dimensional spaces.
\end{abstract}


\maketitle
\section{Introduction and preliminaries}
Throughout this paper, $X$ will denote a Banach space over the field $\mathbb{K}$ ($\mathbb{K}=\mathbb{R}$ or $\mathbb{C}$).
By an operator, we mean a linear and continuous map acting on $X$,
and we denote by $\mathcal{B}(X)$ the set of all operators acting on $X.$\\
\noindent A (discrete) linear dynamical system is a pair $(X,T)$ consisting of
a Banach space $X$ and an operator $T$.
The most important and studied notions in the linear dynamical system are those of hypercyclicity and supercyclicity.

We say that $T\in\mathcal{B}(X)$ is \textit{hypercyclic} if there exists a vector $x$ whose orbit under $T$; $$Orb(T,x):=\{T^n x \mbox{ : }n\in \mathbb{N}\},$$
 is dense in $X,$ in which case, the vector $x$ is called a \textit{hypercyclic vector} for $T$.
The set of all hypercyclic vectors for $T$ is denoted by $HC(T).$

\noindent An equivalent notion of the hypercyclicity in context of separable Banach spaces, called \textit{topological transitivity}, was introduced by Birkhoff in \cite{Birkhoff}$:$
an operator $T$ acting on a separable Banach space $X$ is hypercyclic if and only if it is topologically transitive; that is, for each pair $(U,V)$ of nonempty open subsets of $X$ there exists $n\in\mathbb{N}$ such that
$$T^n(U)\cap V\neq\emptyset.$$


In $1974$, Hilden and Wallen in \cite{HW} introduced the concept of supercyclicity.
An operator $T$ acting on $X$ is said to be s\textit{upercyclic} if there exists $x\in X$ such that
$$\mathbb{K}Orb(T,x):=\{\lambda T^n x\mbox{ : }\lambda\in\mathbb{K}\mbox{, }n\in\mathbb{N}\}$$
is dense in $X$.
Such a vector $x$ is called a \textit{supercyclic vecto}r for $T$.
The set of all supercyclic vectors for $T$ is denoted by $SC(T).$\\
\noindent An operator $T$ acting on a separable Banach space $X$ is supercyclic if and only if for each pair
$(U,V)$ of nonempty open subsets of $X$ there exist $\lambda\in\mathbb{K}$ and $n\in\mathbb{N}$ such that
$$\lambda T^n(U)\cap V\neq\emptyset,$$
see \cite[Theorem 1.12]{BM}.

For more information about hypercyclic and supercyclic operators and their proprieties, see K.G. Grosse-Erdmann and A. Peris's book \cite{Peris} and F. Bayart and E. Matheron's book \cite{BM}, and the survey article \cite{Grosse} by K.G. Grosse-Erdmann.

Another important notion in the linear dynamical system that has a long story is that of recurrence
 which introduced by Poincar\'{e} in \cite{Poincare}.
Later, it have been studied by Gottschalk and Hedlund \cite{GH} and also by Furstenberg \cite{Furstenberg}.
Recently, recurrent operators have been studied in \cite{CMP}.

We say that $T\in\mathcal{B}(X)$ is \textit{recurrent} if for each open subset $U$ of $X$,
there exists $n\in\mathbb{N}$ such that
$$T^n(U)\cap U\neq\emptyset.$$
A vector $x\in X$ is called a recurrent vector for $T$ if there exists an increasing sequence $(n_k)\subset\mathbb{N}$ such that
$$T^{n_k}x\longrightarrow x \ \ \mbox{ as } \ \ k\longrightarrow\infty.$$
We denoted by $Rec(T)$ the set of all recurrent vectors for $T$.
and we have that $T$ is recurrent if and only if $Rec(T)$ is dense in $X.$
For more information about this classe of operators, see \cite{Ak,BGLP,GM,CP,E,EG,GMOP,GMQ,HZ,YW}.

\noindent Recall that an operator $T\in\mathcal{B}(X)$ is said to be \textit{rigid} if there exists a strictly increasing sequence $(n_k)\subset\mathbb{N}$ such that
$$T^{n_k}x\longrightarrow x, \ \ \mbox{ as } \ \ k\longrightarrow \infty, \ \ 
\mbox{ for all } \ \ x\in X.$$
This means that each vector of the space $X$ is a recurrent vector for $T$ with respect to the same sequence $(n_k).$\\
\noindent An operator $T$ is said to be \textit{uniformly rigid} if there exists a strictly increasing sequence $(n_k)\subset \mathbb{N}$ such that
$$\Vert T^{n_k}-I\Vert=\sup_{\Vert x\Vert\leq1}\Vert T^{n_k}x-x\Vert\longrightarrow 0,\ \ \mbox{ as } k\longrightarrow\infty.$$
Clearly, we have that
$$  T\mbox{ is uniformly rigid } \Rightarrow T\mbox{ is rigid }\Rightarrow T\mbox{ is recurrent}. $$
The converses of those implications does not hold in general, see \cite{CMP}.

Recently in \cite{ABS}, recurrent operators have been generalized to a large class of operators called \textit{super-recurrent} operators.
We say that $T\in\mathcal{B}(X)$ is super-recurrent if for each open subset $U$ of $X$, there exist $\lambda\in\mathbb{K}$ and $n\in\mathbb{N}$ such that
$$\lambda T^{n}(U)\cap U\neq\emptyset.$$
A vector $x\in X$ is called a super-recurrent vector for $T$ if there exist an increasing sequence $(n_k)\subset\mathbb{N}$ and a sequence $(\lambda_k)\subset \mathbb{K}$ such that
$$\lambda_k T^{n_k}x\longrightarrow x\ \mbox{as}\ k\longrightarrow\infty.$$
We denoted by $SRec(T)$ the set of all recurrent vectors for $T$.

Taking into consideration the link between recurrence and their deviation,
 we introduce in this paper the notions of super-rigidity and uniform super-rigidity which are related to the notion of super-recurrence and generalize the notions of rigidity and uniform rigidity.

In Section $2$, we study the notion of super-rigidity.
We give the relationship between super-rigid, rigid, super-recurrent, and recurrent operators.
Also we prove that the super-rigidity is preserved under similarity and we give the relationship between the super-rigidity of an operator $T$ and its iterates by showing that $T$ is super-rigid if and only for each $p\geq2$, the operator $T^p$ is super-rigid.
At the end of the section, several spectral properties of super-rigid operators will be proven.

In Section $3$, we introduce and study the notion of uniformly super-rigid operators.
As in the super-rigidity's case, we prove that the property of being uniform super-rigid is preserved under similarity and that $T$ is uniformly super-rigid if and only if $T^p$ is uniformly super rigid for all $p.$
Moreover, we prove that the spectrum of a uniform-super-rigid operator has a specific property.
This lead us to show that being invertible is a necessary condition of being uniformly super-rigid.

In Section $4$, we characterize the super-recurrence in finite-dimensional spaces, 
and we show that in this case, the notions of super-recurrent, super-rigid and uniformly super-rigid are equivalent.

\section{Super-rigid operators}
In the following, we introduce the notion of \textit{super-rigid} operators which generalizes the notion of rigidity and related to the notion of super-recurrence. 
\begin{definition}
An operator $T$ acting on $X$ is called super-rigid if there exist a strictly increasing sequence of positive integers $(n_k)_{n\in\mathbb{N}}$ and a sequence $(\lambda_{k})_{n\in\mathbb{N}}$ of numbers such that
$$ \lambda_{k} T^{n_k}x\longrightarrow x$$
for every $x\in X.$
\end{definition}

\begin{example}\label{exa}
For $1< p<\infty$, let $X=\ell^p(\mathbb{N})$.
Let $R$ be a strictly positive number and $(\lambda_k)$ a sequence of numbers such that
$\lambda_k\in\{z \in\mathbb{C} \mbox{ : } \vert z \vert =R \}$ for all $k$.
Let $T$ be an operator defined on $\ell^p(\mathbb{N})$ by
$$  T(x_1,x_2,\dots)=(\lambda_1 x_1,\lambda_2 x_2,\dots), \hspace{0.3cm} \mbox{ for all } (x_k)\in \ell^p(\mathbb{N}).$$
Then $T$ is super-rigid. Indeed, this is since $R^{-1}T $ is recurrent, see \cite[Theorem 5.4]{CMP}.
\end{example}
\begin{remark}\label{remp}
It is clear that the notion of super-rigidity implies the notion of rigidity,
but the converse does not hold in general.
Indeed, let $T$ be the operator defined in Example \ref{exa}, then $T$ is super-rigid.
However, $T$ is recurrent if and only if $\lambda_k=1$, for all $k,$
see \cite[Theorem 5.4]{CMP}.
\end{remark}
Let $T$ be an operator acting on $X$. It is clear that
$$ T \mbox{ is super-rigid } \Rightarrow T \mbox{ is super-recurrent. } $$
However, the converse does not hold in general even if $T$ is supercyclic as we show in the next example.
\begin{example}\label{exs}
Let $(e_n)_{n\in\mathbb{N}}$ be the canonical basis of $\ell^2(\mathbb{N})$ and $\textbf{w}=(\omega_n)_{n\in\mathbb{N}}$ be a bounded sequence of positive numbers.
Let $B_\textbf{w}$ be the weighted backward shift operator, defined on $\ell^2(\mathbb{N})$ by
$$ B_\textbf{w}(e_0)=0 \hspace{0.2cm} \mbox{ and } \hspace{0.2cm} B_\textbf{w}(e_n)=\omega_n e_{n-1} \hspace{0.1cm} \mbox{ for all }n\geq1. $$
$B_\textbf{w}$ is super-recurrent since it is supercyclic, see \cite[Example 1.15]{BM}.
However, $B_\textbf{w}$ cannot be super-rigid since $B_\textbf{w}(e_1)=0$.
\end{example}
We have the following diagram showing the relationships among super-rigidity, rigidity, super-recurrence, and recurrence. 
\begin{center}
\begin{tikzpicture}
  \matrix (m) [matrix of math nodes,row sep=2em,column sep=5em,minimum width=5em]
  {
    \mbox{ Rigidity }&\mbox{ Recurrence }\\
    \mbox{ Super-rigidity} & \mbox{Super-recurrence} \\};
  \path[-stealth]
    (m-1-1) edge  node [left] {$\not\uparrow_{\mbox{ \tiny{Remark \ref{remp} }}}$} (m-2-1)
            edge  node [below] {$\nleftarrow_{\mbox{\tiny{\cite[Example 5.4]{CMP}}}}$} (m-1-2)
    (m-2-1.east|-m-2-2) edge  node [below] {}
            node [above] {$\nleftarrow_{\mbox{\tiny{Example \ref{exs}}}}$} (m-2-2)
    (m-1-2) edge  node [right] {$\not\uparrow_{\mbox{\tiny{\cite[Remarks 2.2]{ABS}}}}$} (m-2-2);
\end{tikzpicture}
\end{center}
In the following proposition, we prove that the super-rigidity is preserved under similarity.
\begin{proposition}
Let $T\in\mathcal{B}(X)$ and $S\in \mathcal{B}(Y)$.
Assume that $T$ and $S$ are similar.
Then $T$ is super-rigid on $X$ if and only if $S$ is super-rigid on $Y$.
\end{proposition}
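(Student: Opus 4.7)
The plan is to exploit the fact that similarity means there is an invertible intertwining operator $\Phi:X\to Y$ with $S\Phi=\Phi T$, so powers satisfy $S^{n}\Phi=\Phi T^{n}$ for every $n$, and the same relation holds for scalar multiples. Since similarity is a symmetric relation (swapping $\Phi$ for $\Phi^{-1}$ interchanges the roles of $T$ and $S$), it suffices to prove one direction.

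Assume $T$ is super-rigid on $X$, so there exist a strictly increasing sequence $(n_k)\subset\mathbb{N}$ and a scalar sequence $(\lambda_k)\subset\mathbb{K}$ with $\lambda_k T^{n_k}x\to x$ for every $x\in X$. I would show that the \emph{same} sequences $(n_k)$ and $(\lambda_k)$ witness super-rigidity for $S$. Given any $y\in Y$, set $x=\Phi^{-1}y\in X$. Using the intertwining $S^{n_k}=\Phi T^{n_k}\Phi^{-1}$, I would write
$$\lambda_k S^{n_k}y=\lambda_k\,\Phi T^{n_k}\Phi^{-1}y=\Phi\bigl(\lambda_k T^{n_k}x\bigr).$$
Then by super-rigidity of $T$ applied to $x$, $\lambda_k T^{n_k}x\to x$, and continuity of $\Phi$ yields $\Phi(\lambda_k T^{n_k}x)\to \Phi x=y$. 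Hence $\lambda_k S^{n_k}y\to y$ for every $y\in Y$, proving $S$ is super-rigid.

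There is essentially no obstacle here: the proof is a direct transport-of-structure argument, and the only point worth flagging is that the witnessing sequences $(n_k)$ and $(\lambda_k)$ can be chosen to be identical for $T$ and $S$, so no refinement or diagonal extraction is needed. The converse direction follows verbatim by replacing $\Phi$ with $\Phi^{-1}$.
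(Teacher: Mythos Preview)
Your proof is correct and follows essentially the same approach as the paper: pull back $y$ to $x$ via the intertwining homeomorphism, apply the super-rigidity of $T$, and push forward by continuity. Your handling of the quantifiers is in fact slightly cleaner than the paper's, since you fix the universal witnessing sequences $(n_k)$ and $(\lambda_k)$ before choosing $y\in Y$, making it explicit that the same sequences work for $S$.
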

\begin{proof}
Since $T$ and $S$ are similar,
then there exists a homeomorphisme $\phi$ : $X\longrightarrow Y$ such that $S\circ\phi=\phi\circ T.$
Let $y\in Y$, then there exists $x\in X$ such that $y=\phi x.$
Since $T$ is super-rigid in $X$, there exist a strictly increasing sequence $(n_k)$ of positive integers and a sequence $(\lambda_{k})$ of numbers such that
$ \lambda_{k}T^{n_k}x\longrightarrow x $ as $k\longrightarrow+\infty$.
Since $\phi$ is continuous and $S\circ\phi=\phi\circ T$, it follows that
$ \lambda_{k}S^{n_k}y\longrightarrow y $
as $k\longrightarrow+\infty$,
which means that $S$ is super-rigid.
\end{proof}
Let $p$ be a nonzero fixed positive integer.
In $1995$, Ansari proved that $T$ is a hypercyclic (resp, supercyclic) operator on a separable Banach space if and only if $T^p$ is hypercyclic (resp, supercyclic), see \cite{Ansari}.
Later, the same result was proven for recurrent operators, see \cite[Proposition 2.3]{CMP}, and for super-recurrent operators, see \cite[Theorem 3.11]{ABS}.
In the following theorem, we prove that this result remains true for super-rigid operators. 
\begin{theorem}
Let $T$ be an operator acting on $X$. Then $T$ is super-rigid if and only if $T^p$ is super-rigid for all $p\geq2$.
\end{theorem}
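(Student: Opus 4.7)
The plan is to handle each direction separately. The reverse implication is immediate: if $T^{p}$ is super-rigid for all $p \geq 2$, choose $p = 2$ and pick witnesses $(\mu_k)$, $(m_k)$ for $T^{2}$, so that $\mu_k (T^{2})^{m_k} x \to x$ for every $x \in X$. Setting $\lambda_k := \mu_k$ and $n_k := 2 m_k$ gives a strictly increasing sequence in $\mathbb{N}$ with $\lambda_k T^{n_k} x \to x$, so $T$ is super-rigid.

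For the forward direction, fix $p \geq 2$ and pick witnesses $(\lambda_k)$, $(n_k)$ for the super-rigidity of $T$. Put $A_k := \lambda_k T^{n_k}$, so that $A_k x \to x$ for every $x \in X$. The identity that drives the proof is
$$
A_k^{p} \;=\; \lambda_k^{p}\, T^{p n_k} \;=\; \lambda_k^{p}\, (T^{p})^{n_k},
$$
so if one can show that $A_k^{p} x \to x$ for every $x$, then $\mu_k := \lambda_k^{p}$ and $m_k := n_k$ witness the super-rigidity of $T^{p}$ (with $(m_k)$ strictly increasing because $(n_k)$ is). To obtain $A_k^{p} x \to x$, first observe that each sequence $(A_k x)_k$ is convergent, hence bounded, so by the Banach--Steinhaus theorem there exists $M > 0$ with $\|A_k\| \leq M$ for all $k$. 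Using this bound together with the decomposition
$$
A_k^{2} x - x \;=\; A_k(A_k x - x) + (A_k x - x)
$$
and the estimate $\|A_k(A_k x - x)\| \leq M\,\|A_k x - x\|$, one gets $A_k^{2} x \to x$. A straightforward induction on $j$, relying on $\|A_k^{j}\| \leq M^{j}$, then yields $A_k^{j} x \to x$ for every fixed $j \geq 1$, in particular for $j = p$.

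The only real subtlety is this passage from pointwise convergence of $A_k$ to pointwise convergence of $A_k^{p}$; without a uniform bound on $\|A_k\|$ one could not control $A_k$ applied to the null sequence $(A_k x - x)$, so the appeal to Banach--Steinhaus is the crucial move. Everything else is bookkeeping directly from the definition of super-rigidity.
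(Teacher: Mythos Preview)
Your proof is correct and follows essentially the same approach as the paper: both use the Banach--Steinhaus theorem to obtain a uniform bound $M$ on $\|\lambda_k T^{n_k}\|$, and then a telescoping argument (written inductively by you, as a single chained estimate in the paper) to pass from $\lambda_k T^{n_k} x \to x$ to $\lambda_k^{p} T^{p n_k} x \to x$. The only difference is that you also spell out the reverse implication, which the paper leaves implicit.
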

\begin{proof}
Assume that $T$ is super-rigid, then there exist a strictly increasing sequence $(n_k)$ of positive integers and a sequence $(\lambda_{k})$ of numbers such that
$\lambda_{k} T^{n_k}x\longrightarrow x$ for every $x\in X$.
Let $M:=\sup_{n\in\mathbb{N}}\Vert\lambda_{k}T^{n_k}\Vert$. By Banach-Steinhaus theorem we have $M<+\infty.$
Let $x$ be a vector of $X$, then we have
\begin{align*}
\Vert \lambda_{k}^{p} T^{pn_k}x-x \Vert&=\Vert \lambda_{k}^{p}T^{pn_k}x-\lambda_{k}^{p-1}T^{(p-1)n_k}x+\lambda_{k}^{p-1}T^{(p-1)n_k}x-\dots+\lambda_{k}T^{n_k}x-x \Vert\\
                                       &\leq \Vert \lambda_{k}^{p}T^{pn_k}x-\lambda_{k}^{p-1}T^{(p-1)n_k}x \Vert+\dots+\Vert \lambda_{k}T^{n_k}x-x \Vert\\
                                        &\leq \Vert\lambda_{k}^{p-1}T^{p-1}\Vert\Vert \lambda_{k}T^{n_k}x-x \Vert+\dots+\Vert \lambda_{k}T^{n_k}x-x \Vert\\
                                        &\leq \left( \sum_{i=0}^{p-1}M^i \right)\Vert \lambda_{k}T^{n_k}x-x \Vert.
\end{align*}
Since $T$ is super-rigid with respect to $(n_k)$ and $(\lambda_{k})$, it follows that 
$\lambda_{k}^{p} T^{pn_k}x\longrightarrow x,$
which means that $T^p$ is super-rigid with respect to  $(n_k)$ and $(\lambda_{k}^p)$.\\
\end{proof}
In the following, we will give a characterization of the spectrum of super-rigid operator.
We begin with the following lemma.
\begin{lemma}\label{lem1}
Let $T$ be an operator acting on $X$ and $\lambda$ a nonzero number.
Then, $T$ is super-rigid if and only if $\lambda T$ is super-rigid.
\end{lemma}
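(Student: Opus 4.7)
The plan is to observe that the defining property of super-rigidity only constrains the product $(\text{scalar}) \cdot T^{n_k}$, so replacing $T$ by $\lambda T$ merely amounts to multiplying the approximating scalars by $\lambda^{n_k}$ (or $\lambda^{-n_k}$), which is harmless as long as $\lambda \neq 0$.

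For the forward implication, I would assume $T$ is super-rigid, so there exist a strictly increasing sequence $(n_k) \subset \mathbb{N}$ and scalars $(\lambda_k) \subset \mathbb{K}$ with $\lambda_k T^{n_k} x \to x$ for every $x \in X$. Since $(\lambda T)^{n_k} = \lambda^{n_k} T^{n_k}$, I would set
\[
\mu_k := \frac{\lambda_k}{\lambda^{n_k}},
\]
which is well-defined because $\lambda \neq 0$. Then
\[
\mu_k (\lambda T)^{n_k} x \;=\; \frac{\lambda_k}{\lambda^{n_k}}\,\lambda^{n_k} T^{n_k} x \;=\; \lambda_k T^{n_k} x \;\longrightarrow\; x
\]
for every $x \in X$, so $\lambda T$ is super-rigid with respect to $(n_k)$ and $(\mu_k)$.

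For the reverse implication, I would apply the forward direction with $\lambda T$ in place of $T$ and $\lambda^{-1}$ in place of $\lambda$; then $\lambda^{-1}(\lambda T) = T$ is super-rigid. Alternatively, one can give the direct argument: assuming $\mu_k (\lambda T)^{n_k} x \to x$, set $\lambda_k := \mu_k \lambda^{n_k}$ and note $\lambda_k T^{n_k} x \to x$.

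There is no real obstacle here — the lemma is essentially a bookkeeping statement about how the scalar sequence in the definition of super-rigidity absorbs multiplicative constants, and the only hypothesis actually used is $\lambda \neq 0$ (so that $\lambda^{n_k}$ is invertible). The lemma is stated separately because it will be a convenient tool for the subsequent spectral characterizations of super-rigid operators.
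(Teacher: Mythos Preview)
Your proof is correct and follows essentially the same approach as the paper: both define $\mu_k = \lambda^{-n_k}\lambda_k$ and verify that $\mu_k(\lambda T)^{n_k}x = \lambda_k T^{n_k}x \to x$. Your treatment is in fact slightly more complete, since you explicitly dispatch the reverse implication (by applying the forward direction with $\lambda^{-1}$), whereas the paper only writes out the forward direction and leaves the converse implicit.
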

\begin{proof}
Assume that $T$ is super-rigid.
then there exists a strictly increasing sequence $(n_k)\subset \mathbb{N}$ and $(\lambda_k)\subset\mathbb{K}$ such that $\lambda_k T^{n_k}x\longrightarrow x$ for every $x\in X$.
Let $\mu_k=\lambda^{-n_k}\lambda_k$.
Then
$$\mu_k (\lambda T)^{n_k}x=\lambda^{-n_k}\lambda^{n_k}\lambda_k T^{n_k}x=\lambda_k T^{n_k}x\longrightarrow x,$$
for every $x\in X$.
This implies that $\lambda T$ is super-rigid.
\end{proof}

The next proposition shows that the eigenvalues  of a super-rigid operators has the same argument.
\begin{proposition}\label{pr1}
Let $T$ be an operator acting on $X$.
If $T$ is super-rigid, then
$$\sigma_p(T)\subset \{z\in\mathbb{C} \mbox{ : }\vert z \vert=R\},$$
for some strictly positive real number $R$.
\end{proposition}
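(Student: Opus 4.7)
The plan is to use the super-rigidity data directly on eigenvectors. Suppose $T$ is super-rigid with witnessing sequences $(n_k)\subset\mathbb{N}$ and $(\lambda_k)\subset\mathbb{K}$, so that $\lambda_k T^{n_k}x\to x$ for every $x\in X$. If $\alpha\in\sigma_p(T)$ and $x\neq 0$ is a corresponding eigenvector, then $T^{n_k}x=\alpha^{n_k}x$, and the super-rigidity condition forces $(\lambda_k\alpha^{n_k}-1)x\to 0$, hence $\lambda_k\alpha^{n_k}\to 1$.

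First I would rule out $0\in\sigma_p(T)$: if $Tx=0$ with $x\neq 0$, then $\lambda_k T^{n_k}x=0$ for all $k$ (since $n_k\geq 1$), which cannot converge to $x\neq 0$. Thus every eigenvalue is nonzero, and the relation $\lambda_k\alpha^{n_k}\to 1$ makes sense for each $\alpha\in\sigma_p(T)$.

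Next, given any two eigenvalues $\alpha,\beta\in\sigma_p(T)$, both nonzero by the previous step, I would form the ratio
\[
\frac{\lambda_k\alpha^{n_k}}{\lambda_k\beta^{n_k}}=\left(\frac{\alpha}{\beta}\right)^{n_k}\longrightarrow 1,
\]
and take absolute values to obtain $|\alpha/\beta|^{n_k}\to 1$. Since $(n_k)$ is strictly increasing to infinity, a modulus different from $1$ would produce a sequence tending either to $0$ or to $\infty$; therefore $|\alpha|=|\beta|$. Setting $R:=|\alpha|$ for any fixed eigenvalue $\alpha$ (or any positive number if $\sigma_p(T)$ is empty) yields the desired inclusion $\sigma_p(T)\subset\{z\in\mathbb{C}:|z|=R\}$ with $R>0$.

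No serious obstacle is expected here; the argument is essentially algebraic once one observes that super-rigidity, when tested on an eigenvector, reduces to a scalar convergence statement. The only subtlety is the elimination of $0$ as an eigenvalue, which uses the fact that $(n_k)$ is strictly increasing in $\mathbb{N}$ (so in particular $n_k\geq 1$), and that the sequence $(\lambda_k)$ plays no role in the contradiction since $\lambda_k\cdot 0=0$.
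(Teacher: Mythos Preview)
Your argument is correct. Both your proof and the paper's rest on the same core observation: testing the super-rigidity condition on an eigenvector $x$ with $Tx=\alpha x$ yields the scalar convergence $\lambda_k\alpha^{n_k}\to 1$. The execution differs, however. The paper argues by contradiction: assuming $|\lambda_1|<|\lambda_2|$, it invokes Lemma~\ref{lem1} to replace $T$ by $\frac{1}{m}T$ for some $m$ with $|\lambda_1|<m<|\lambda_2|$, and then shows that the resulting scaling sequence $(\mu_k)$ must satisfy both $|\mu_k|\to\infty$ and $|\mu_k|\to 0$. Your route is more direct: you simply divide the two scalar limits to obtain $(\alpha/\beta)^{n_k}\to 1$ and read off $|\alpha|=|\beta|$ from $n_k\to\infty$. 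This avoids the auxiliary rescaling lemma entirely and also makes the exclusion of $0\in\sigma_p(T)$ explicit, a point the paper's proof leaves implicit. The paper's detour through Lemma~\ref{lem1} buys nothing extra here, though the same rescaling trick is reused later in Proposition~\ref{pr2}.
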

\begin{proof}
Assume that there exist $\lambda_1$ and $\lambda_2$ in the point spectrum of $T$ such that
$\vert\lambda_1\vert<\vert\lambda_2\vert$.
Let $m$ be a strictly positive real number such that $\vert\lambda_1\vert<m<\vert\lambda_2\vert$.
Since $\lambda_1$, $\lambda_2\in \sigma_p(T)$, it follows that there exist $x$, $y\in X\setminus\{0\}$
such that $Tx=\lambda_1 x$ and $Ty=\lambda_2 y.$
By Lemma \ref{lem1}, the operator $\frac{1}{m}T$ is super-rigid.
Hence, there exist a sequence $(\mu_k)\subset \mathbb{K}$ and a sequence $(n_k)\subset\mathbb{N}$ such that
\begin{equation}\label{eo1}
\mu_k\left( \frac{\lambda_1}{m}\right)^{n_k}x\longrightarrow x  \hspace{0.2cm}
\end{equation}
 and 
 \begin{equation}\label{eo2}
\hspace{0.2cm} \mu_k\left( \frac{\lambda_2}{m}\right)^{n_k}y\longrightarrow y. 
 \end{equation}
By (\ref{eo1}), we have $\vert \mu_k \vert\longrightarrow +\infty$, and by (\ref{eo2}), we have $\vert \mu_k\vert\longrightarrow 0$, which is a contradiction.
\end{proof}
\begin{remark}
Proposition \ref{pr1} is not true in general for super-recurrent operators which are not super-rigid.
Indeed, let $(e_n)_{n\in\mathbb{N}}$ be the canonical basis of $\ell^2(\mathbb{N})$ and $\textbf{w}=(\omega_n)_{n\in\mathbb{N}}$ be a bounded sequence of positive numbers.
Suppose that $B_\textbf{w}$ is defined as in Example \ref{exs}, then $B_\textbf{w}$ is super-recurrent.
However, if $\omega_n=1$, for all $n\geq1$, and $B_\textbf{w}=B$ is the backward shift operator, then
$$ \sigma_p(B)=B(0,1). $$
\end{remark}

Let $T$ be a super-rigid operator acting on a Banach space $X$.
Since any super-rigid operator is super-recurrent, it follows by \cite[Theorem 4.2]{ABS} that
that if $T$ is super-rigid, then the eigenvalues of $T^{*}$; the Banach adjoint operator of $T$, are of the same argument.
This means that there exists $R_1>$ such that $\sigma_p(T^*)\subset \{z\in\mathbb{C} \mbox{ : }\vert z \vert=R_1\}$.
Moreover, by Proposition \ref{pr1}, 
the eigenvalues of $T$ are of the same argument.
This means that
there exists $R_2>$ such that $\sigma_p(T)\subset \{z\in\mathbb{C} \mbox{ : }\vert z \vert=R_2\}$.
The next proposition shows that we have $R_1=R_2.$ This means that the eigenvalues of $T$ and the eigenvalues of his Banach adjoint are of the same argument.
\begin{proposition}\label{pr2}
Let $T$ be an operator acting on $X$.
If $T$ is super-rigid, then the eigenvalues of $T$ and the eigenvalues of his Banach adjoint are of the same argument.
This means that there exists some $R>0$ such that
$$\sigma_p(T) \cup \sigma_p(T^{*})\subset \{z\in\mathbb{C} \mbox{ : }\vert z \vert=R\}.$$
\end{proposition}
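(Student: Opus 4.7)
\medskip

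My plan is to exploit the duality between $T$ and $T^{*}$ to relate the scalars coming from super-rigidity to the eigenvalues of the adjoint. We may assume both $\sigma_p(T)$ and $\sigma_p(T^{*})$ are nonempty, since otherwise the conclusion follows immediately from Proposition \ref{pr1} (or from \cite[Theorem 4.2]{ABS}) applied to the single nonempty part. So pick $\lambda \in \sigma_p(T)$ with eigenvector $x \neq 0$ and $\mu \in \sigma_p(T^{*})$ with eigenvector $\varphi \in X^{*}\setminus\{0\}$; by the two results cited just before the statement, $|\lambda| = R_2$ and $|\mu| = R_1$. The aim is to show $R_1 = R_2$.

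Since $T$ is super-rigid, fix a strictly increasing sequence $(n_k)\subset\mathbb{N}$ and scalars $(\alpha_k)\subset\mathbb{K}$ with $\alpha_k T^{n_k} y \to y$ for every $y\in X$. Taking $y=x$ and using $T^{n_k}x=\lambda^{n_k}x$, we get $\alpha_k\lambda^{n_k}x\to x$, hence $\alpha_k\lambda^{n_k}\to 1$ because $x\neq 0$. Next I would pass to the adjoint at the level of the bilinear pairing: for every $y\in X$,
\[
\alpha_k\,\langle y,(T^{*})^{n_k}\varphi\rangle
=\langle \alpha_k T^{n_k}y,\varphi\rangle \longrightarrow \langle y,\varphi\rangle.
\]
Since $(T^{*})^{n_k}\varphi=\mu^{n_k}\varphi$, the left side equals $\alpha_k\mu^{n_k}\langle y,\varphi\rangle$. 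Choosing $y$ with $\langle y,\varphi\rangle\neq 0$ (possible as $\varphi\neq 0$) yields $\alpha_k\mu^{n_k}\to 1$.

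Dividing the two limits gives $(\lambda/\mu)^{n_k}\to 1$, so in particular $|\lambda/\mu|^{n_k}\to 1$. Because $(n_k)$ is strictly increasing, this forces $|\lambda/\mu|=1$, that is $|\lambda|=|\mu|$, and hence $R_1=R_2=:R$. Together with Proposition \ref{pr1} and \cite[Theorem 4.2]{ABS}, this yields $\sigma_p(T)\cup\sigma_p(T^{*})\subset\{z\in\mathbb{C}:|z|=R\}$.

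The argument is short; the only subtle point to get right is the passage to the dual. One cannot expect $\alpha_k(T^{*})^{n_k}\to I$ in operator norm, but the weak-$*$ convergence obtained from the pairing $\langle\alpha_k T^{n_k}y,\varphi\rangle$ is exactly what is needed to isolate the scalar sequence $\alpha_k\mu^{n_k}$, and that is the main (minor) obstacle.
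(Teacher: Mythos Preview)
Your proof is correct and follows essentially the same approach as the paper: both arguments hinge on applying the adjoint eigenvector to the convergence $\alpha_k T^{n_k}y\to y$ via the duality pairing, which yields $\alpha_k\mu^{n_k}\langle y,\varphi\rangle\to\langle y,\varphi\rangle$ and hence isolates the scalar sequence $\alpha_k\mu^{n_k}$. The only cosmetic difference is that the paper proceeds by contradiction after first scaling $T$ by an intermediate $1/m$ via Lemma~\ref{lem1} (obtaining $|\mu_k|\to\infty$ and $|\mu_k|\to 0$ simultaneously), whereas you argue directly by dividing the two limits $\alpha_k\lambda^{n_k}\to 1$ and $\alpha_k\mu^{n_k}\to 1$.
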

\begin{proof}
Assume that there exist $\lambda_1\in \sigma_p(T)$ and $\lambda_1\in \sigma_p(T^{*})$ such that
$ \vert\lambda_1\vert<m<\vert\lambda_2\vert, $
where $m$ is a strictly positive real number.
Since $\lambda_1\in \sigma_p(T)$ and $\lambda_1\in \sigma_p(T^{*})$,
it follows that there exist $x\in X\setminus\{0\}$ and $x^{*}\in X^{*}\setminus\{0\}$ such that
$Tx=\lambda_1 x$ and $T^{*}x^{*}=\lambda_2 x^{*}$.
By lemma \ref{lem1}, the operator $\frac{1}{m}T$ is super-rigid.
Hence, there exist a sequence $(\mu_k)\subset \mathbb{K}$ and a sequence $(n_k)\subset\mathbb{N}$ such that
$\mu_k\left( \frac{1}{m}T \right)^{n_k}y\longrightarrow y$, for all $y\in X.$
In particular, for $y=x,$ we have
\begin{equation}\label{eq1}
\mu_k\left( \frac{1}{m}T \right)^{n_k}x=\mu_k\left( \frac{\lambda_1}{m} \right)^{n_k}x\longrightarrow x.
\end{equation}
Since $x^{*}$ is an nonzero linear form on $X$, it follows that there exists $z\in X$ such that $x^{*}(z)\neq0.$
Since $\frac{1}{m}T$ is super-rigid and $x^{*}$ is continuous, we have
\begin{equation}\label{eq2}
\mu_k\left( \frac{\lambda_2}{m} \right)^{n_k}x^{*}(z)\longrightarrow x^{*}(z).
\end{equation}
\end{proof}
By (\ref{eq1}), we have $\vert \mu_k\vert\longrightarrow+\infty$ and by (\ref{eq2}), we have $\vert \mu_k\vert\longrightarrow0$,
which is a contradiction.
\begin{example}
Let $T$ be the operator defined in Example \ref{exa}.
If $p=2,$ then it is easy to prove that
$ \sigma_p(T)=\{\lambda_1,\lambda_2,\dots\}. $
Moreover, a simple verification shows that the adjont of $T$ is defined by
$$ T^{*}(x_1,x_2,\dots)=(\overline{\lambda_1}x_1, \overline{\lambda_2}x_2,\dots). $$
Hence,
$ \sigma_p(T^*)=\{\overline{\lambda_1},\overline{\lambda_2},\dots\}. $
This shows that
$$\sigma_p(T) \cup \sigma_p(T^{*})\subset \{z\in\mathbb{C} \mbox{ : }\vert z \vert=R\}.$$
\end{example}
\section{Uniformly super-rigid operators}
In the following, we introduce the notion of uniform super-rigidity which generalizes the notion of uniform rigidity.
\begin{definition}
An operator $T$ acting on $X$ is called uniformly super-rigid if there exist a strictly increasing sequence of positive integers $(n_k)_{n\in\mathbb{N}}$ and a sequence $(\lambda_{k})_{n\in\mathbb{N}}$ of  numbers such that
$$ \Vert \lambda_{k} T^{n_k}-I\Vert=\sup_{x\neq0}\Vert\lambda_{k} T^{n_k}x-x \Vert\longrightarrow0.$$
\end{definition}
\begin{example}\label{exab}
Let $T$ be an operator defined on $\mathbb{C}^n$ by$:$
$
T (x_1,\dots,x_n) =(\lambda_1 x_1,\dots,\lambda_n x_n)$,
where $\lambda_i\in\mathbb{K}$ and $\lambda_i=\lambda_j=R>0$, for $1\leq i, j\leq n.$
Let $(x_1,\dots x_n)\neq(0,\dots 0)$ and $m\geq 0$, we have
$$ R^{-m}T^m (x_1,\dots x_n)-(x_1,\dots x_n)=(R^{-m}\lambda_1^m x_1,\dots R^{-m}\lambda_n^m x_n). $$
Since 
$\lambda_i=\lambda_j=R>0$, for $1\leq i, j\leq n,$ it follows that there exists a strictly increasing sequence $(m_k)$ of positive integers such that
$$ \Vert R^{m_k} T^{m_k}(x_1,\dots x_n)-(x_1,\dots x_n) \Vert\longrightarrow 0. $$
This means that $T$ is a uniformly super-rigid operator.
\end{example}
\begin{remark}
It is clear that the uniform super-rigidity implies the uniform rigidity.
However, the converse does not hold in general.
Indeed,
let $T$ be the operator defined as in the Example \ref{exab}, then $T$ is a uniformly super-rigid whenever 
$\lambda_i=\lambda_j=R>0$, for $1\leq i, j\leq n.$
But the operator $T$ is uniform rigid if and only if $\lambda_i=\lambda_j=1$, for $1\leq i, j\leq n,$ see \cite[Section 4]{CMP}.
\end{remark}
\begin{lemma}\label{lem2}
Let $T$ be an operator acting on $X$.
If $T$ is uniformly super-rigid, then $\lambda T$ is uniformly super-rigid for all nonzero number $\lambda.$
\end{lemma}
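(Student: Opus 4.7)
The plan is to mimic the proof of Lemma \ref{lem1} essentially verbatim, since the substitution used there is an operator identity and therefore survives the passage from pointwise convergence to norm convergence. Concretely, assume $T$ is uniformly super-rigid with witnessing sequences $(n_k) \subset \mathbb{N}$ and $(\lambda_k) \subset \mathbb{K}$, so that $\|\lambda_k T^{n_k} - I\| \longrightarrow 0$. Fix a nonzero scalar $\lambda \in \mathbb{K}$ and define $\mu_k := \lambda^{-n_k}\lambda_k$.

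The key step is the operator identity
$$
\mu_k(\lambda T)^{n_k} \;=\; \lambda^{-n_k}\lambda_k \cdot \lambda^{n_k} T^{n_k} \;=\; \lambda_k T^{n_k},
$$
which holds as an equality in $\mathcal{B}(X)$, not merely when applied to an individual vector. Consequently,
$$
\|\mu_k (\lambda T)^{n_k} - I\| \;=\; \|\lambda_k T^{n_k} - I\| \;\longrightarrow\; 0,
$$
so $\lambda T$ is uniformly super-rigid with respect to the sequences $(n_k)$ and $(\mu_k)$.

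There is essentially no obstacle here; the only thing worth emphasizing is that the scalar rescaling $\mu_k = \lambda^{-n_k}\lambda_k$ is chosen precisely so that the factors of $\lambda^{n_k}$ cancel, making the identity an operator identity. In contrast to the proof of the corresponding statement for super-rigidity (Lemma \ref{lem1}), no appeal to a Banach--Steinhaus-type bound is needed, because we are not estimating after evaluating on a vector but comparing operator norms directly.
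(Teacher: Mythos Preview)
Your proof is correct and matches the paper's own argument exactly: the paper simply states that if $T$ is uniformly super-rigid with respect to $(\mu_k)$ and $(n_k)$, then $\lambda T$ is uniformly super-rigid with respect to $(\mu_k\lambda^{-n_k})$ and $(n_k)$, which is precisely the operator identity you wrote out.
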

\begin{proof}
If $T$ is uniformly super-rigid with respect to a sequence $(\mu_k)$ and a sequence $(n_k)$,
then it is easy to show that $\lambda T$ is uniformly super-rigid with respect to $(\mu_k \lambda^{-n_k})$ and $(n_k).$
\end{proof}
\begin{remark}
It is clear that each uniform super-rigid operator is super-rigid.
However, the converse does not hold in general.
Indeed,
let $(\theta_k)_{k\in\mathbb{N}}$ be a sequence of real numbers such that
$$ \liminf_{n\longrightarrow+\infty}\left(  \sup_{k\in\mathbb{N}} \lvert e^{2\pi in \theta_k}-1\rvert \right)\nrightarrow0.  $$
Let $R>0$, and $\lambda_k= R e^{2\pi in \theta_k}$, for all $k\in\mathbb{N}$.
Let $T$ be the operator defined as in Example \ref{exa}.
Then $T$ is super-rigid.
On the other hand, assume that if $T$ is uniformly super-rigid, then $R^{-1}T$ is uniformly rigid,
which is not possible by \cite[Theorem 5.4]{CMP}.
Hence, $T$ is super-rigid but not uniformly super-rigid.
\end{remark}
In the following proposition, we prove that the uniform super-rigidity is preserved under similarity.
\begin{proposition}
Let $T\in\mathcal{B}(X)$ and $S\in \mathcal{B}(Y)$.
Assume that $T$ and $S$ are similar.
Then $T$ is uniformly super-rigid on $X$ if and only if $S$ is uniformly super-rigid on $Y$.
\end{proposition}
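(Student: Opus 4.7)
The plan is to leverage the relation $S^n=\phi T^n \phi^{-1}$ that comes with a similarity. Recall that $T$ and $S$ similar means there is a bounded linear isomorphism $\phi:X\to Y$ (so both $\phi$ and $\phi^{-1}$ are bounded) with $S\circ \phi=\phi\circ T$, hence by induction $S^{n}\phi=\phi T^{n}$ for every $n\in\mathbb{N}$, i.e.\ $S^{n}=\phi T^{n}\phi^{-1}$.

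Suppose $T$ is uniformly super-rigid on $X$, witnessed by a strictly increasing sequence $(n_k)\subset\mathbb{N}$ and a scalar sequence $(\lambda_k)\subset\mathbb{K}$ with $\|\lambda_k T^{n_k}-I_X\|\to 0$. Then
\[
\lambda_k S^{n_k}-I_Y \;=\; \lambda_k \phi T^{n_k}\phi^{-1}-\phi\phi^{-1}\;=\;\phi\bigl(\lambda_k T^{n_k}-I_X\bigr)\phi^{-1},
\]
so the submultiplicativity of the operator norm gives
\[
\|\lambda_k S^{n_k}-I_Y\|\;\le\;\|\phi\|\,\|\phi^{-1}\|\,\|\lambda_k T^{n_k}-I_X\|\;\longrightarrow\;0.
\]
Thus $S$ is uniformly super-rigid with respect to the same sequences $(n_k)$ and $(\lambda_k)$. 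The converse implication is obtained by interchanging the roles of $T$ and $S$ via the similarity $\phi^{-1}$, which satisfies $T\phi^{-1}=\phi^{-1}S$.

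There is essentially no obstacle here: the only thing one must be careful about is that similarity in this linear-dynamical setting is implemented by a \emph{bounded linear} isomorphism (not merely a topological homeomorphism, as was sufficient in the super-rigid case), since the uniform condition is stated in the operator norm and thus requires $\|\phi\|,\|\phi^{-1}\|<\infty$ to transport the norm estimate from $X$ to $Y$.
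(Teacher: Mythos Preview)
Your proof is correct and follows essentially the same approach as the paper: both use the intertwining relation $S\circ\phi=\phi\circ T$ to rewrite $\lambda_k S^{n_k}-I_Y$ in terms of $\lambda_k T^{n_k}-I_X$ and then bound via $\|\phi\|$. Your version is slightly cleaner, since you work directly with the operator identity $\lambda_k S^{n_k}-I_Y=\phi(\lambda_k T^{n_k}-I_X)\phi^{-1}$ and invoke submultiplicativity, whereas the paper evaluates on vectors and takes a supremum; this also makes explicit the factor $\|\phi^{-1}\|$, which the paper's pointwise computation leaves implicit.
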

\begin{proof}
Since $T$ and $S$ are similar,
then there exists a homeomorphisme $\phi$ : $X\longrightarrow Y$ such that $S\circ\phi=\phi\circ T.$
\\
\noindent Since $T$ is uniformly super-rigid, it follows that a strictly increasing sequence of positive integers $(n_k)_{n\in\mathbb{N}}$ and a sequence $(\lambda_{k})_{n\in\mathbb{N}}$ of numbers such that
$$ \Vert \lambda_{k} T^{n_k}-I\Vert=\sup_{x\neq0}\Vert\lambda_{k} T^{n_k}x-x \Vert\longrightarrow0.$$

\noindent Let $y$ be a nonzero vector of $Y$ and pick $x$ a nonzero vector of $X$ such that $y=\phi(x)$.
Then
\begin{align*}
\Vert \lambda_{k} S^{n_k}-I\Vert&=\sup_{y\neq0}\Vert\lambda_{k} S^{n_k}y-y \Vert\\
                                &=\sup_{x\neq0}\Vert\lambda_{k} S^{n_k}\circ\phi( x)-\phi(x) \Vert\\
                                &=\sup_{x\neq0}\Vert \phi(\lambda_{k} T^{n_k}x-x) \Vert\\
                                &\leq \Vert\phi\Vert \sup_{x\neq0}\Vert\lambda_{k} T^{n_k}x-x \Vert\longrightarrow0.
\end{align*}
Hence $S$ is uniformly super-rigid on $Y$.
\end{proof}
In the following theorem, we give the relationship between the uniform super-rigidity of an operator and its iterations.
\begin{theorem}\label{thm1}
Let $T$ be an operator acting on $X$. Then
$T$ is uniformly super-rigid if and only if $T^p$ is uniformly super-rigid, for all $p\geq2$.
\end{theorem}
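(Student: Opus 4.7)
The plan is to prove both directions of the equivalence separately. The forward implication will follow the same telescoping pattern used for super-rigidity in the preceding theorem, adapted to operator norm convergence; the reverse implication is essentially immediate once one rescales the iteration indices.

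For the forward direction, I would assume $T$ is uniformly super-rigid with respect to $(n_k)$ and $(\lambda_k)$, fix $p\geq 2$, and use the telescoping identity
$$\lambda_k^p T^{pn_k} - I = \sum_{j=0}^{p-1}\lambda_k^{j} T^{jn_k}\bigl(\lambda_k T^{n_k} - I\bigr)$$
to obtain
$$\Vert \lambda_k^p T^{pn_k} - I\Vert \leq \left(\sum_{j=0}^{p-1}\Vert \lambda_k T^{n_k}\Vert^{j}\right)\Vert \lambda_k T^{n_k} - I\Vert.$$
The crucial observation is that here, unlike in the super-rigidity theorem where Banach--Steinhaus was invoked, the boundedness of $\Vert \lambda_k T^{n_k}\Vert$ comes for free: from $\Vert \lambda_k T^{n_k} - I\Vert\to 0$ we immediately get $\Vert \lambda_k T^{n_k}\Vert\to 1$, so there is some $M>0$ with $\Vert\lambda_k T^{n_k}\Vert\leq M$ for all $k$. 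Hence the prefactor is bounded by $\sum_{j=0}^{p-1}M^{j}$, a constant independent of $k$, and $\Vert\lambda_k^p T^{pn_k} - I\Vert\to 0$. This shows $T^p$ is uniformly super-rigid with respect to $(n_k)$ and $(\lambda_k^p)$.

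For the converse, suppose $T^p$ is uniformly super-rigid for all $p\geq 2$; in particular take $p=2$ and pick $(n_k)$ and $(\mu_k)$ with $\Vert \mu_k (T^2)^{n_k} - I\Vert \to 0$. Setting $m_k := 2n_k$ yields a strictly increasing sequence of positive integers, and the hypothesis reads exactly $\Vert \mu_k T^{m_k} - I\Vert \to 0$, so $T$ itself is uniformly super-rigid with respect to $(m_k)$ and $(\mu_k)$.

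The only real content is in the forward direction, and even there the argument is routine once one writes down the telescoping; the main point to flag is that the operator-norm convergence lets one bypass Banach--Steinhaus and estimate the iterates $\Vert\lambda_k^{j}T^{jn_k}\Vert$ directly by submultiplicativity. I do not anticipate a substantive obstacle.
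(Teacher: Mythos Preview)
Your proof is correct and follows the same telescoping argument as the paper's. Two minor differences worth noting: the paper invokes Banach--Steinhaus to bound $\sup_k\Vert\lambda_k T^{n_k}\Vert$, whereas you correctly observe that in the uniform case this is unnecessary since $\Vert\lambda_k T^{n_k}-I\Vert\to 0$ already forces $\Vert\lambda_k T^{n_k}\Vert\to 1$; and you explicitly supply the (trivial) converse direction, which the paper's proof omits.
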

\begin{proof}
 Let $p$ be a strictly positive integer.
Assume that $T$ is uniformly super-rigid, then there exist a strictly increasing sequence of positive integers $(n_k)_{k\in\mathbb{N}}$ and a sequence $(\lambda_{k})_{k\in\mathbb{N}}$ of numbers such that
$ \Vert \lambda_{k} T^{n_k}-I\Vert\longrightarrow0.$
By Banach-Steinhaus theorem we have $\sup_{n\in\mathbb{N}}\Vert\lambda_{k}T^{n_k}\Vert=M:=<+\infty.$
It follows that
\begin{align*}
\Vert \lambda_{k}^{p} T^{pn_k}-I \Vert&\leq \Vert \lambda_{k}^{p-1}T^{(p-1)n_k}+\lambda_{k}^{p-2}T^{(p-2)n_k}+ \dots+I \Vert\Vert \lambda_{k}T^{n_k}-I \Vert\\
                                        &\leq \left( \sum_{i=0}^{p-1}M^i \right)\Vert \lambda_{k}T^{n_k}-I \Vert.
\end{align*}
This shows that $T^p$ is uniformly super-rigid whenever $T$ is uniformly super-rigid.
\end{proof}
Let $T$ be an operator acting on $X$.
If $T$ is super-rigid,
then by Proposition \ref{pr2}, there exists some $R>0$ such that 
$\sigma_p(T) \cup \sigma_p(T^{*})\subset \{z\in\mathbb{C} \mbox{ : }\vert z \vert=R\}.$
For uniformly super-rigid operators we have a significant strengthening of this result.
\begin{theorem}\label{thp}
Let $T$ be an operator acting on $X$.
If $T$ is uniformly super-rigid, then there exists $R>$ such that
$$\sigma(T)\subset \{z\in\mathbb{C} \mbox{ : }\vert z \vert=R\}.$$
\end{theorem}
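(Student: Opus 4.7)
The plan is to combine the norm estimate $\|\lambda_k T^{n_k}-I\|\to 0$ with the spectral mapping theorem to squeeze every spectral value of $T$ onto a single circle. The argument is in the spirit of Proposition \ref{pr1}, but now applied to the full spectrum rather than only to the point spectrum, which is possible because uniform super-rigidity is a norm condition rather than a pointwise one.

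First, I would observe that since $\|\lambda_k T^{n_k}-I\|\to 0$, for all sufficiently large $k$ the operator $\lambda_k T^{n_k}$ is a small norm perturbation of $I$ and is therefore invertible. In particular, $\lambda_k\neq 0$ and $T^{n_k}$ is invertible, forcing $T$ itself to be invertible. Consequently $0\notin\sigma(T)$, which will guarantee $R>0$ at the end.

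Next, fix $\mu\in\sigma(T)$. By the spectral mapping theorem, $\lambda_k\mu^{n_k}\in\sigma(\lambda_k T^{n_k})$. Every point $z$ of $\sigma(\lambda_k T^{n_k})$ satisfies
$$|z-1|\leq r(\lambda_k T^{n_k}-I)\leq \|\lambda_k T^{n_k}-I\|\longrightarrow 0,$$
so $\lambda_k\mu^{n_k}\to 1$, and in particular $|\lambda_k|\cdot|\mu|^{n_k}\to 1$ for every $\mu\in\sigma(T)$.

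Finally, given $\mu_1,\mu_2\in\sigma(T)$, dividing the two convergences $|\lambda_k|\cdot|\mu_1|^{n_k}\to 1$ and $|\lambda_k|\cdot|\mu_2|^{n_k}\to 1$ yields $(|\mu_1|/|\mu_2|)^{n_k}\to 1$. Since $(n_k)$ is strictly increasing, hence tends to $+\infty$, this forces $|\mu_1|=|\mu_2|$. Thus all points of $\sigma(T)$ share a common modulus $R$, and $R>0$ because $0\notin\sigma(T)$. The main subtlety is just making sure one justifies the application of the spectral mapping theorem and the passage from $\|\lambda_k T^{n_k}-I\|\to 0$ to a spectral inclusion in a small disk around $1$; over $\mathbb{C}$ this is routine via the spectral radius bound, and in the real case one may pass to the complexification.
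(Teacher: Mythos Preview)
Your argument is correct and considerably more direct than the paper's. The paper proceeds by contradiction via approximate eigenvectors: after normalizing so that $\|T\|=1$, it assumes there is some $\lambda\in\sigma(T)$ with $|\lambda|<1$, uses Proposition~\ref{pr2} to rule out the residual part of the spectrum and conclude that such a $\lambda$ must lie in the approximate point spectrum, then for each $k$ manufactures unit vectors $y_k$ with $\|T^{n_k}y_k-\lambda^{n_k}y_k\|$ small enough to force $\|\mu_kT^{n_k}y_{n_k}\|\to 0$, contradicting $\|\mu_kT^{n_k}-I\|\to 0$. Your route sidesteps all of this: the spectral mapping theorem together with the bound $r(\lambda_kT^{n_k}-I)\le\|\lambda_kT^{n_k}-I\|$ immediately pins every point of $\sigma(\lambda_kT^{n_k})$ near $1$, and a simple ratio argument finishes. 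This is shorter, avoids the somewhat delicate construction of the approximate eigenvectors $y_k$, and yields the invertibility of $T$ for free (which the paper derives afterwards as a separate proposition). The paper's approach has the modest advantage of staying within the pointwise/approximate-eigenvector framework already used in Propositions~\ref{pr1} and~\ref{pr2}, so it is stylistically of a piece with the rest of the section, but your spectral-radius argument is the more natural one here.
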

\begin{proof}
Assume that $T$ is uniformly super-rigid.
Then there exist a sequence $(\mu_k)\subset\mathbb{K}$ and a sequence $(n_k)\subset \mathbb{N}$ such that
$ \Vert \mu_k T^{n_k}-I\Vert\longrightarrow0 $ as $k\longrightarrow\infty.$

\noindent Without loss of generality, by Lemma \ref{lem2}, we may suppose that $\Vert T \Vert=1.$
By Proposition \ref{pr2}, we have $\sigma_p(T) \cup \sigma_p(T^{*})\subset \{z\in\mathbb{C} \mbox{ : }\vert z \vert=R\}$ for some $0< R \leq1$.
Since $\sigma_p(T) \cap \mathbb{D}=\emptyset$, it follows that $R=1.$

\noindent If $\lambda\in \sigma(T)\cap\mathbb{D}$, then by the previous discussion, $\lambda$ is necessarily in $\sigma_a(T)$, the approximate point spectrum of $T.$

\noindent By contradiction, assume that there exists $\lambda\in\mathbb{K}$ such that $\lambda\in\sigma(T)\cap\mathbb{D}$.
Since $\lambda\in \sigma(T)$, it follows by the spectral theorem that $\lambda^p\in \sigma(T^p),$ for all $p\ge2.$
On the other hand, by Theorem \ref{thm1}, the operator $T^p$ is uniformly super rigid for all $p\ge2.$
Hence $\lambda^p$ is in the approximate point spectrum of $T^p$.
Thus, for all $p\ge2,$ there exists a sequence $(x_k^{(p)})_{k\in\mathbb{N}}\subset \{z\in \mathbb{C}\mbox{ : }\vert z\vert=1\}$
such that $\Vert Tx_k^{(p)})-\lambda x_k^{(p)})  \Vert \longrightarrow0$ as $k\longrightarrow\infty.$
Using this, one can find sequence $(y_k)_{k\in\mathbb{N}}$ of $X$ such that $(y_k)_{k\in\mathbb{N}}\subset \{\vert z\vert=1\}$ and $\Vert T^k y_k-\lambda y_k \Vert<a_k$,
where $(a_k)$ is such that $(\vert\mu_k\vert(a_k+\vert\lambda\vert^k)\longrightarrow0$ as $k\longrightarrow\infty$.
This implies that 
$$ \Vert \mu_k T^ky_k \Vert\leq 
\vert \mu_k\vert (a_k+\vert\lambda\vert^k)
\longrightarrow0 \mbox{ as }k\longrightarrow\infty.$$
On the other hand
$$ \left\lvert \mbox{ }\Vert \mu_k T^{n_k}y_{n_k}\Vert-1 \right\rvert\leq \Vert \mu_k T^{n_k}y_{n_k} - y_{n_k} \Vert\leq \Vert \mu_k T^{n_k}-I \Vert  \longrightarrow0 \mbox{ as }k\longrightarrow\infty,$$
which is a contradiction.
\end{proof}
\begin{remark}
The inclusion in Theorem \ref{thp} could be a strict inclusion.
Indeed, let $\lambda$ be an nonzero number, then $\lambda I$ is a uniformly super-rigid operator.
On the other hand, let $\vert \lambda\vert=R$.
Then
$$\sigma(T)=\{\lambda\}\varsubsetneq \{z\in\mathbb{C} \mbox{ : }\vert z \vert=R\}.$$
\end{remark}
\begin{proposition}
Let $T$ be an operator acting on $X$.
If $T$ is uniformly super-rigid, then it is invertible.
\end{proposition}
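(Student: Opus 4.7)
The plan is to derive this as an immediate corollary of Theorem \ref{thp}. That theorem says that if $T$ is uniformly super-rigid, then there exists some $R > 0$ such that
\[
\sigma(T) \subset \{z \in \mathbb{C} : |z| = R\}.
\]
Since $R > 0$, the circle $\{z : |z| = R\}$ does not contain $0$. Consequently $0 \notin \sigma(T)$, which by the definition of the spectrum means precisely that $T$ is invertible in $\mathcal{B}(X)$.

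The main (indeed only) substantive point is that Theorem \ref{thp} delivers a \emph{strictly positive} radius $R$; this is exactly the strengthening the previous section made when passing from super-rigidity (where one only controls $\sigma_p(T) \cup \sigma_p(T^*)$) to uniform super-rigidity (where one controls the full spectrum). Without the positivity of $R$ there would be no argument, so the real work was already done in Theorem \ref{thp}. The step here is essentially a one-line deduction and there is no obstacle beyond invoking that result.

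One could alternatively make the proof self-contained by arguing directly: if $T$ were not invertible, then $0 \in \sigma(T)$, contradicting $\sigma(T) \subset \{|z| = R\}$ with $R > 0$. But this is the same argument phrased differently. Thus the proof is a single appeal to Theorem \ref{thp} together with the observation $0 \notin \{|z| = R\}$.
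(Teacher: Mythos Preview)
Your proof is correct and follows exactly the same approach as the paper: invoke Theorem \ref{thp} to conclude that $\sigma(T)$ lies on a circle of strictly positive radius, and then observe that $0 \notin \sigma(T)$, so $T$ is invertible.
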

\begin{proof}
Assume that $T$ is uniformly super-rigid.
By Theorem \ref{thp}, there exists $R>$ such that
$\sigma(T)\subset \{z\in\mathbb{C} \mbox{ : }\vert z \vert=R\}$.
Hence, $0\notin \sigma(T)$, which means that $T$ is invertible.
\end{proof}
\section{Finite Dimensional Spaces}
In this section, we will characterize the super-recurrence, super-rigidity, and uniform super-rigidity in finite-dimensional space.

In the following, if $T\in \mathcal{B}(\mathbb{K}^d)$, then we denote by $A$ a matrix of $T$.
Moreover, if $T$ is super-recurrent, let $R>0$ be such that each component of the spectrum of $T$ intersects the circle $\{z\in\mathbb{C} \mbox{ : }\vert z \vert=R\}$, see \cite[Theorem 4.1]{ABS}.

In the complex case, we have then the following theorem.
\begin{theorem}
Let $T\in\mathcal{B}(\mathbb{C}^d)$.
Then the following statements are equivalent$:$
\begin{enumerate}
\item $T$ is super-recurrent;
\item $T$ is super-rigid;
\item $T$ is uniformly super-rigid;
\item $A$ is similar to a diagonal matrix with entries of the same argument.
\end{enumerate}
\end{theorem}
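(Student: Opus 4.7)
The plan is to close the chain of implications $(3)\Rightarrow(2)\Rightarrow(1)\Rightarrow(4)\Rightarrow(3)$. The first two arrows follow immediately from the definitions: the norm convergence $\|\mu_k T^{n_k}-I\|\to 0$ at once yields the pointwise convergence defining super-rigidity, and a super-rigid operator is super-recurrent because for any nonempty open $U$ and any $x\in U$ one has $\mu_k T^{n_k}x\in U$ for all sufficiently large $k$, which places $\mu_k T^{n_k}x$ in $\mu_k T^{n_k}(U)\cap U$.

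For $(4)\Rightarrow(3)$ I will use the similarity invariance of uniform super-rigidity proved in Section 3 to reduce to $T=\operatorname{diag}(\lambda_1,\ldots,\lambda_d)$ with $\lambda_j=Re^{2\pi i\theta_j}$. A standard pigeonhole/Dirichlet argument in the torus $\mathbb{T}^d$, applied to the orbit $\{n(\theta_1,\ldots,\theta_d)\bmod 1\}_{n\in\mathbb{N}}$, produces a strictly increasing sequence $(n_k)$ with $n_k\theta_j\to 0\pmod 1$ for every $j$ simultaneously. Then $\|R^{-n_k}T^{n_k}-I\|=\max_j|e^{2\pi i n_k\theta_j}-1|\to 0$, so $\mu_k:=R^{-n_k}$ witnesses uniform super-rigidity.

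For $(1)\Rightarrow(4)$ the spectral locus $\sigma(T)\subset\{z\in\mathbb{C}:|z|=R\}$ is immediate from the cited result \cite[Theorem 4.1]{ABS}, since in finite dimension $\sigma(T)$ is finite and each eigenvalue is its own connected component. The remaining task is to prove that $A$ is diagonalizable. I will argue by contradiction: assume a Jordan block of size $k\geq 2$ at some $\lambda\in\sigma(T)$, and fix a Jordan basis $e_1,\ldots,e_k$ with $Te_1=\lambda e_1$ and $Te_j=\lambda e_j+e_{j-1}$ for $j\geq 2$. Choose $U=B(e_k,\varepsilon)$ in $\mathbb{C}^d$ with $\varepsilon$ to be specified. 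Using the $T$-invariance of the spectral projection $p$ onto the generalized eigenspace $E_\lambda$, any hypothetical $u,u'\in U$ with $\mu T^n u=u'$ also satisfies $\mu T^n p(u)=p(u')$, with $p(u),p(u')$ lying within $\|p\|\varepsilon$ of $e_k$ in $E_\lambda$. Expanding in the Jordan basis, the $e_k$-coordinate equation forces $|\mu|R^n$ to lie near $1$; substituting this into the $e_{k-1}$-coordinate equation then makes a term of size $\sim n/R$ appear which must be $O(\varepsilon)$. For $\varepsilon$ small enough this excludes every $n\geq 1$, so $\mu T^n(U)\cap U=\emptyset$ for all $\mu\in\mathbb{K}$ and $n\in\mathbb{N}$, contradicting super-recurrence.

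The main obstacle will be precisely this Jordan-block analysis: it relies on the incompatibility between the polynomial growth of the subdiagonal entry $n\lambda^{n-1}$ of $T^n$ and the constraint $|\mu|R^n\approx 1$ imposed by the top Jordan coordinate. Carrying out the computation directly in $\mathbb{C}^d$ (rather than inside $E_\lambda$) is what forces the use of the commuting spectral projection, which prevents the perturbations from the other generalized eigenspaces from spoiling the coordinate bounds.
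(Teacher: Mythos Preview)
Your proposal is correct and follows essentially the same strategy as the paper: both arguments close the cycle via $(4)\Rightarrow(3)$ by simultaneous Diophantine approximation on the torus, and $(1)\Rightarrow(4)$ by showing that a nontrivial Jordan block is incompatible with super-recurrence through the growth of the off-diagonal term $n\lambda^{n-1}$ against the constraint $\mu\lambda^n\approx 1$. The one difference worth recording is in how the Jordan-block contradiction is set up: the paper invokes \cite[Proposition~3.7]{ABS} to reduce to the $2\times 2$ block $\bigl(\begin{smallmatrix}\lambda&1\\0&\lambda\end{smallmatrix}\bigr)$ acting super-recurrently on $\mathbb{C}^2$ and then chooses a super-recurrent \emph{vector} with nonzero second coordinate, whereas you stay in $\mathbb{C}^d$ and work directly from the open-set definition, using the commuting spectral projection onto $E_\lambda$ to isolate the top two Jordan coordinates. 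Your route avoids the external reduction lemma and the need to produce a super-recurrent vector with a prescribed nonzero coordinate; the paper's route is shorter once that lemma is available. The underlying computation is identical.
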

\begin{proof}
We need only to prove that $(4)\Rightarrow (3)$ and $(1)\Rightarrow (4).$\\

\noindent $(4)\Rightarrow (3)$ : Assume that $A$ is similar to a diagonal matrix with entries $\lambda_1,\dots,\lambda_d\in\{z\in\mathbb{C} \mbox{ : }\vert z \vert=R\}$ for some $R>0$.
Then there exists an increasing sequence $(n_k)$ such that $(R^{-1}\lambda_i)^{n_k}\longrightarrow 1$, for $1\leq i\leq d$.
This implies that that $T$ is uniformly super-rigid.

\noindent $(1)\Rightarrow (4)$ : Assume that $T$ is super-recurrent.
Using the fact that $\sigma_p(T)=\sigma(T)$ and \cite[Theorem 4.1]{ABS}, we conclude that 
$\sigma(T)=\{\lambda_1,\dots,\lambda_M\}\subset\{z\in\mathbb{C} \mbox{ : }\vert z \vert=R\},$ where each $\lambda_i$ with multiplicities $m_i.$
By Jordan decomposition theorem, the matrix $A$ is similar to a matrix of the form
$$\begin{pmatrix}
A_1 & 0 & \dots & 0 \\ 
0 & A_2 & \dots & 0 \\ 
\vdots & \vdots & \ddots & \vdots \\ 
0 & 0 & \dots & A_M
\end{pmatrix},  $$
where each $A_j$ has the form $A_j=\lambda_j I_{m_j}$ or the form
\begin{equation}\label{eq13}
A_j=\begin{pmatrix}
\lambda_j & 1 & 0 & \dots & 0 \\ 
0 & \lambda_j & 1 & \dots & 0 \\ 
\vdots & \vdots & \ddots & \ddots & \vdots \\ 
0 & 0 & \dots & \lambda_j & 1 \\ 
0 & 0 & \dots & 0 & \lambda_j
\end{pmatrix}.   
\end{equation}
Assume that there exists a block $A_{j_0}$ has the form  $(\ref{eq13})$, then $m_j\geq2$.
Using \cite[Proposition 3.7]{ABS}, we conclude that the operator presented by the matrix
$$
B=\begin{pmatrix}
\lambda_{j_0} & 1 \\ 
0 & \lambda_{j_0}
\end{pmatrix} 
$$
is super-recurrent on $\mathbb{C}^2$.
By straightforward induction, we have for all $n\in\mathbb{N}$
$$ 
B^n=\begin{pmatrix}
\lambda_{j_0}^n & n  \lambda_{j_0}^{n-1}\\ 
0 & \lambda_{j_0}^n
\end{pmatrix}.
$$
Let $(z_1,z_2)\in\mathbb{C}^2$ be a super-recurrent vector for $B$ with $z_2\neq0$.
Then there exist $(\mu_k)\subset \mathbb{C}$ and a strictly increasing sequence of positive integers $(n_k)$ such that
$$
\mu_k
\begin{pmatrix}
\lambda_{j_0}^{n_k} & n  \lambda_{j_0}^{n_k-1}\\ 
0 & \lambda_{j_0}^{n_k}
\end{pmatrix}
\begin{pmatrix}
z_1 \\ 
z_2
\end{pmatrix} 
\longrightarrow
(z_1,z_2).
$$
which implies that
$$ \mu_k \lambda_{j_0}^{n_k} z_2\longrightarrow z_2 \hspace{0.2cm}\mbox{ and }\hspace{0.2cm}
\mu_k \lambda_{j_0}^{n_k}z_1+\mu_k n_k\lambda_{j_0-1}^{n_k}z_2\longrightarrow z_1.$$
This is impossible since by hypothesis $z_2\neq0.$
\end{proof}
In the real case, we have the following theorem.
\begin{theorem}
Let $T\in \mathbb{R}^d$.
The following assertions are equivalent$:$
\begin{enumerate}
\item $T$ is super-recurrent;
\item $T$ is super-rigid;
\item $T$ is uniformly super-rigid;
\item $A$ is similar to a matrix of the form
\begin{equation}\label{eq24}
\begin{pmatrix}
A_1 & 0 & \dots & 0 \\ 
0 & A_2 & \dots & 0 \\ 
\vdots & \vdots & \ddots & \vdots \\ 
0 & 0 & \dots & A_M
\end{pmatrix}, 
\end{equation}
where each $A_k$, $1\leq k\leq M$, is $1\times 1$ matrix with entry $R$ or $-R$, or $2\times 2$ matrix of the form
\begin{equation}\label{eq34}
\begin{pmatrix}
a & b \\ 
-b & a
\end{pmatrix},
\hspace{0.3cm} a\mbox{, }b\in\mathbb{R}.
\end{equation}
\end{enumerate}
\end{theorem}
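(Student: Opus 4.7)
My plan is to imitate the complex-case proof, replacing the complex Jordan form by the real Jordan form and the compact torus by the compact orthogonal group $O(d)$. The implications $(3)\Rightarrow(2)\Rightarrow(1)$ are immediate from the definitions, so only $(4)\Rightarrow(3)$ and $(1)\Rightarrow(4)$ need attention.

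For $(4)\Rightarrow(3)$, I would first observe that each block $A_k$ in (\ref{eq24}) is $R$ times an orthogonal matrix: a $\pm 1$ in the $1\times 1$ case, and, writing $a=R\cos\theta$, $b=R\sin\theta$ in (\ref{eq34}), a planar rotation in the $2\times 2$ case (this tacitly uses $a^2+b^2=R^2$, which is needed for the equivalence to hold). Hence $R^{-1}A\in O(d)$, and by Bolzano--Weierstrass together with the group structure the sequence $\{(R^{-1}A)^n\}_{n\ge 1}$ has a subsequence $(R^{-1}A)^{n_k}\to I$ in operator norm. Taking $\lambda_k=R^{-n_k}$ then yields $\|\lambda_k T^{n_k}-I\|\longrightarrow 0$, i.e., uniform super-rigidity.

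For $(1)\Rightarrow(4)$, I would invoke \cite[Theorem 4.1]{ABS} to place the complex spectrum of $T$ on a single circle $\{|z|=R\}$; since $A$ is real, its spectrum is conjugation-invariant, so the eigenvalues are either $\pm R$ or non-real conjugate pairs $a\pm ib$ with $a^2+b^2=R^2$. It remains to rule out non-trivial real Jordan blocks, and as in the complex proof \cite[Proposition 3.7]{ABS} lets me restrict super-recurrence to the invariant subspace of a single block. For a real-eigenvalue block $B=\begin{pmatrix}\pm R & 1\\ 0 & \pm R\end{pmatrix}$ the contradiction is exactly the one obtained at the end of the complex-case theorem. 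For a complex-eigenvalue block
\[
B=\begin{pmatrix} C & I_2\\ 0 & C\end{pmatrix},\qquad C=\begin{pmatrix} a & b\\ -b & a\end{pmatrix},\ a^2+b^2=R^2,
\]
one computes $B^n=\begin{pmatrix} C^n & nC^{n-1}\\ 0 & C^n\end{pmatrix}$ and derives a similar contradiction.

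The main obstacle I expect is this last step. The key observation is that $C$ is $R$ times a rotation, so $\|C^n\|=R^n$ exactly. If $(z,w)\in\mathbb{R}^2\oplus\mathbb{R}^2$ with $w\ne 0$ were super-recurrent for $B$, then along the super-recurrence sequence $(\mu_k,n_k)$ we would simultaneously have $\mu_k C^{n_k}w\to w$ (forcing $|\mu_k|$ comparable to $R^{-n_k}$) and $\mu_k(C^{n_k}z+n_k C^{n_k-1}w)\to z$; but $|\mu_k|\cdot n_k\cdot\|C^{n_k-1}w\|$ is of order $n_k/R$ and tends to infinity, whereas $\mu_k C^{n_k}z$ stays bounded, a contradiction. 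Once both types of non-trivial blocks are excluded, the real Jordan form of $A$ reduces to the block structure in (\ref{eq24}), finishing $(1)\Rightarrow(4)$.
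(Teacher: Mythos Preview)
Your proposal is correct and follows essentially the same route as the paper: both argue $(1)\Rightarrow(4)$ via the real Jordan form, restrict to a single $4\times 4$ block using \cite[Proposition~3.7]{ABS}, and derive a contradiction from the off-diagonal growth $nC^{n-1}$, while for $(4)\Rightarrow(3)$ the paper merely asserts the result and your compactness-of-$O(d)$ argument supplies the missing detail. Your observation that the statement tacitly requires $a^{2}+b^{2}=R^{2}$ in the $2\times 2$ blocks is also correct and worth making explicit.
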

\begin{proof}
We need only to prove that $(4)\Rightarrow (3)$ and $(1)\Rightarrow (4).$\\

\noindent $(1)\Rightarrow (4)$ :
Assume that $T$ is super-recurrent.
By the Jordan decomposition, the matrix $A$ is similar to a matrix which has the form of $(\ref{eq24}).$
We have then two cases.

$\textbf{(1)}$ : Each $A_k$ has the form $\lambda_k I_{m_k}$.
In this case, it suffices to do the same method which was used in the complex case to conclude.

$\textbf{(2)}$ : Each $A_k$ has the form
\begin{equation}\label{eq44}
\begin{pmatrix}
B & I_2 & 0 & \dots & 0 \\ 
0 & B & I_2 & \dots & 0 \\ 
\vdots & \vdots & \ddots & \ddots & \vdots \\ 
0 & 0 & \dots & B & I_2 \\ 
0 & 0 & \dots & 0 & B
\end{pmatrix},
\end{equation}
where $B$ has the form of $(\ref{eq34})$.
We claim that $A_k=B$ for all $k$, $1\leq k\leq M.$
Indeed, if there exist $k_0$ which different to $B$, then by \cite[Proposition 3.7]{ABS},
the operator presented by the matrix
$$
C=\begin{pmatrix}
B & I_2 \\ 
0 & B
\end{pmatrix}
$$
is super-recurrent in $\mathbb{R}^4$.
The same argument used in the complex case lead us to a contradiction.

Hence, if $T$ is super-recurrent, then $A$ is similar to a matrix which has the form of $(\ref{eq24})$.\\
\noindent $(4)\Rightarrow (3)$ : It is not difficult that each matrix of that form is uniformly super-rigid.
\end{proof}

\end{document}